\documentclass[11pt]{article}

\usepackage[a4paper,margin=1in]{geometry}
\usepackage{amsmath,amssymb,amsthm,mathtools}
\usepackage{booktabs}
\usepackage{microtype}
\usepackage[hidelinks]{hyperref}
\usepackage{lmodern}

\allowdisplaybreaks
\newtheorem{theorem}{Theorem}[section]
\newtheorem{proposition}[theorem]{Proposition}
\newtheorem{lemma}[theorem]{Lemma}
\newtheorem{corollary}[theorem]{Corollary}
\theoremstyle{definition}
\newtheorem{definition}[theorem]{Definition}

\newcommand{\C}{\mathbb C}
\newcommand{\R}{\mathbb R}
\newcommand{\card}{\operatorname{card}}
\newcommand{\Spec}{\operatorname{Spec}}
\newcommand{\spn}{\operatorname{span}}
\newcommand{\ip}[2]{\left\langle #1,#2\right\rangle}
\newcommand{\ray}[1]{[#1]}

\hypersetup{
  pdftitle={A Quantum Latin Square of Order Six with Cardinality 29},
  pdfauthor={Aishwarya P. Das and Durgesh Kumar},
  pdfsubject={An exact construction completing the cardinality spectrum for quantum Latin squares of order six},
  pdfkeywords={quantum Latin square, cardinality, punctured orthonormal array, order six}
}

\title{A Quantum Latin Square of Order Six\\
with Cardinality 29}
\author{Aishwarya P. Das\thanks{Email: \href{mailto:aishwarya@diraclabs.com}{aishwarya@diraclabs.com}}
\quad and \quad
Durgesh Kumar\thanks{Corresponding author. Email: \href{mailto:durgesh@diraclabs.com}{durgesh@diraclabs.com}}\\[-1mm]
\small Dirac Labs, Madison, Wisconsin, USA}
\date{August 2026}

\begin{document}
\maketitle

\begin{abstract}
We construct an explicit real quantum Latin square of order six with
cardinality $29$, the last unresolved value in the order-six spectrum.  We
first construct a punctured $6\times6$ array in $\R^5$ whose punctured rows
and columns are orthonormal bases, and then adjoin a common diagonal vector in
an orthogonal one-dimensional summand.  The six diagonal entries lie on one
ray.  Of the thirty off-diagonal entries, two rays occur twice and the other
twenty-six occur once; exact support and coordinate-ratio comparisons
establish this count.  Together with the known constructions, this closes the
cardinality spectrum of quantum Latin squares of order $6$.
\end{abstract}

\noindent\textbf{Keywords.}
quantum Latin square; cardinality; punctured orthonormal array; order six

\section{Introduction}

Quantum Latin squares were introduced by Musto and Vicary as vector-valued
analogues of ordinary Latin squares~\cite{MustoVicary2016}.  In an ordinary
Latin square the entries are symbols, whereas in a quantum Latin square they
are unit vectors, with every row and column forming an orthonormal basis.
Quantum Latin squares are related to unitary error bases and to more general
quantum combinatorial designs~\cite{Goyeneche2018,MustoVicary2019}.

Order six is especially prominent because of Euler's problem of the
thirty-six officers~\cite{Euler1782} and its genuinely quantum
solution~\cite{Rather2022}.  A separate line of work studies the
\emph{cardinality} of a quantum Latin square, meaning the number of entry
vectors after global phases have been identified~\cite{Paczos2021}.

The possible cardinalities have been studied in several recent
papers~\cite{ZhangWangJi2026,ZangEtAl2025,ZhangCao2026,ZhangJi2026}.
For order six, Xu constructed examples with cardinalities $13$, $15$, and
$17$~\cite{Xu131517}.  Zhang, Lv, and Cao's Table~6 lists the six values
$23,25,27,29,32$, and $35$ as uncertain~\cite{ZhangLvCao2026}.  The current
version of Xu constructs $23,25,27,32$, and $35$ and identifies $29$ as the
sole remaining undetermined value~\cite[Theorems~3--7 and
Corollary~1]{Xu2026}.  In this paper, we construct that example.

\begin{theorem}\label{thm:main}
There exists a real quantum Latin square of order six with cardinality $29$.
\end{theorem}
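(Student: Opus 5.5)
The construction follows the route announced in the abstract: a punctured array in $\R^5$, completed by an orthogonal one-dimensional summand. Write $\R^6=\R^5\oplus\R e_6$. Put the vector $e_6$ in every diagonal cell $(i,i)$, and for each off-diagonal cell $(i,j)$, $i\neq j$, choose a unit vector $v_{ij}\in\R^5$. The array is a quantum Latin square exactly when, for each fixed row $i$, the five vectors $\{v_{ij}:j\neq i\}$ form an orthonormal basis of $\R^5$, and the same holds for each fixed column $j$. Indeed, $e_6$ is orthogonal to all of $\R^5$, so adjoining it turns each punctured row or column into an orthonormal basis of $\R^6$. The cardinality is then $1$ (the diagonal ray $[e_6]$) plus the number of distinct rays among the thirty vectors $v_{ij}$. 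To reach $29$ we need exactly $28$ off-diagonal rays, for instance with two rays each appearing twice and the remaining $26$ appearing once.

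To build the punctured array, I would start from a rigid combinatorial skeleton and then perturb it. A natural starting point is a punctured array built from a Latin square of order six with a transversal on the diagonal, with entries taken from a standard basis of $\R^5$. Such an array has very small cardinality, so I would then apply real rotations in two-dimensional coordinate planes to $2\times2$ subsquares. The move is: if cells $(i,j),(i,k),(l,j),(l,k)$ hold $a,b,b,a$ with $a\perp b$, replace them with $R a, R b, R b, R a$ for a rotation $R$ of $\spn\{a,b\}$. This preserves the orthonormality of every row and column while splitting rays. Iterating such moves, with generic angles in nested or overlapping planes as in Xu's constructions, lets the ray count be tuned. I would choose explicit rational rotations, for instance with $(\cos\theta,\sin\theta)=(3/5,4/5)$ or $(5/13,12/13)$, so that every vector has exact rational coordinates. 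The coincidence pattern is designed so that two pairs of cells share a ray, for example the two cells of an untouched $a,\dots,a$ pair lying in no rotated block, and all other vectors are split.

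Verification then has two parts. First, I would check orthonormality of all twelve punctured lines. This follows structurally from the rotation moves, or it can be checked directly from the explicit table. Second, I would count rays exactly. Group the thirty vectors by support (the set of nonzero coordinates), since vectors with different supports span different rays. Within each support class, compare coordinate ratios: two vectors span the same ray iff they are proportional. With rational entries these are finite exact checks, and they should yield $26$ singleton rays and $2$ doubled rays. Finally, $e_6$ is not in $\R^5$, so it is distinct from all of them, giving $28+1=29$.

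The main obstacle is the design step: finding a rotation pattern whose ray count lands on exactly $28$ rather than $27$, $30$, or another neighbouring value. This is plausibly why $29$ resisted earlier constructions, since parity-type constraints arising from $2\times2$ rotations tend to force certain counts. My first attempt would be a small computer search over placements of two or three rotation blocks, and over which base entries are left unrotated, with the ray count computed exactly. Any hit would then be presented as an explicit table to be verified by hand.
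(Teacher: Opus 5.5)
Your outer framework matches the paper's: a common diagonal vector in an orthogonal summand (Proposition~\ref{prop:diagonal-extension}), a punctured orthonormal array in $\R^5$, and an exact ray count by supports and coordinate ratios. But you never produce the punctured array, and for an existence proof like this the array is the entire content; you defer it to a hypothetical search. More seriously, the search space you propose contains no solution. Suppose you start from basis vectors placed by a Latin square with constant diagonal. It must be constant, not a transversal: with six distinct diagonal symbols, any two symbols appear together in four punctured rows, so you would need six pairwise orthogonal vectors in $\R^5$. In that starting array every off-diagonal ray has multiplicity $6$. Your move $a,b,b,a\mapsto Ra,Rb,Rb,Ra$ removes two occurrences each of $\ray{a}$ and $\ray{b}$ and adds two occurrences each of $\ray{Ra}$ and $\ray{Rb}$. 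So every ray multiplicity stays even under any sequence of such moves. Hence there are at most $15$ off-diagonal rays, and the cardinality is at most $16$. The ``parity-type constraint'' you anticipate is therefore not something a search over block placements can avoid; it excludes $29$ for the whole family.

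What is missing is a move that changes one row without a mirrored change in a partner row. The paper replaces an orthonormal pair in a single row by another orthonormal pair in the same plane, chosen orthogonal to a vector already in a column. An example is $(v_{20},v_{21})=\mathcal C_{v_{10}}(Q,S)$ in~\eqref{eq:v2021v3031}, which makes $v_{20}\perp v_{10}$ in column~$0$. Such moves can create rays of multiplicity one, but column orthonormality is no longer automatic. The columns become coupled: in column~$0$ one needs a vector of $\spn\{v_{13},v_{43}\}$ orthogonal to both $v_{20}$ and $v_{30}$. Such a vector exists only if $a_1b_2-a_2b_1=0$, i.e.\ $\Delta=0$ in~\eqref{eq:Delta}. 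The real work of the proof is finding rational unit-circle parameters that satisfy this nonlocal condition, and then checking that exactly the two intended coincidences ($e_0$ and $Y$) occur. Your proposal has no counterpart to either step.
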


The idea of the construction is as follows.  We place the same vector in all
six diagonal cells and construct the remaining entries in its orthogonal
complement $\R^5$.  Each punctured row and column must then form an
orthonormal basis.  Since the common diagonal contributes one ray, the thirty
off-diagonal entries must determine twenty-eight rays.  We achieve this by
making two off-diagonal rays occur twice and all others once.

Our construction differs from Xu's Hadamard-product constructions and from
the linked $\C^4\oplus\C^2$ family in~\cite{Xu2026}.  We obtain the punctured
array through a sequence of orthogonal changes of basis in two-dimensional
subspaces.  Most of these changes are local, but one compatibility condition
links them and accounts for the choice of rational parameters below.  In
Section~\ref{sec:construction}, we construct the array and prove
its orthogonality.  In Appendix~\ref{app:separation}, we compare supports and
coordinate ratios to verify the ray count.

\section{Punctured orthonormal arrays}\label{sec:preliminaries}

Write $[n]=\{0,1,\ldots,n-1\}$.  For a nonzero vector $x$, write
\[
\ray{x}=\{\lambda x:\lambda\in\C\setminus\{0\}\}.
\]
If $x$ and $y$ are unit vectors, then $\ray{x}=\ray{y}$ precisely when
$y=e^{i\theta}x$ for some $\theta\in\R$.  Thus equality of rays identifies
exactly the global phase of a unit vector.

\begin{definition}
A \emph{quantum Latin square of order $n$}, denoted $\operatorname{QLS}(n)$,
is an array
\[
\Phi=(\phi_{ij})_{i,j\in[n]},\qquad \phi_{ij}\in\C^n,
\]
of unit vectors such that every row and every column is an orthonormal basis
of $\C^n$.  Its cardinality is
\[
\card(\Phi)=\left|\{\ray{\phi_{ij}}:i,j\in[n]\}\right|.
\]
We call $\Phi$ \emph{real} if all its entries lie in
$\R^n\subset\C^n$, and write
\[
\Spec(\operatorname{QLS}(n))
=\{\card(\Phi):\Phi\text{ is a }\operatorname{QLS}(n)\}.
\]
\end{definition}

If the same vector is placed in every diagonal cell of a quantum Latin
square, then all off-diagonal entries lie in its orthogonal complement.
Removing the diagonal therefore leads to the following structure.

\begin{definition}
A \emph{punctured orthonormal array of order $n$} in $\C^{n-1}$ is a family
\[
V=(v_{ij})_{i,j\in[n],\ i\ne j}
\]
of unit vectors such that every punctured row $(v_{ij})_{j\ne i}$ and every
punctured column $(v_{ij})_{i\ne j}$ is an orthonormal basis of $\C^{n-1}$.
The same terminology will be used over $\R^{n-1}$, regarded as the standard
real subspace of $\C^{n-1}$.
\end{definition}

The next proposition reverses this reduction by adjoining a common diagonal
vector.

\begin{proposition}[Diagonal extension]\label{prop:diagonal-extension}
Let $V$ be a punctured orthonormal array of order $n$ in $\C^{n-1}$.
Identify $\C^n$ with the orthogonal direct sum $\C\oplus\C^{n-1}$, let
$d=(1,0)$, and define
\[
\phi_{ij}=\begin{cases}
d,&i=j,\\
(0,v_{ij}),&i\ne j.
\end{cases}
\]
Then $\Phi=(\phi_{ij})$ is a $\operatorname{QLS}(n)$, and
\[
\card(\Phi)=1+\left|\{\ray{v_{ij}}:i\ne j\}\right|.
\]
\end{proposition}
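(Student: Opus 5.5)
The proof has two parts: first check that $\Phi$ is a $\operatorname{QLS}(n)$, then count its rays.

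\textbf{Step 1: unit vectors.} Every $\phi_{ij}$ is a unit vector. For $d=(1,0)$ this is immediate. For $(0,v_{ij})$, the norm in the orthogonal direct sum $\C\oplus\C^{n-1}$ is $\|v_{ij}\|=1$.

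\textbf{Step 2: rows are orthonormal bases.} Fix a row $i$. Its entries are $d$ and the $n-1$ vectors $(0,v_{ij})$ with $j\ne i$.
\begin{itemize}
\item Since the summands are orthogonal, $\ip{d}{(0,v_{ij})}=0$.
\item Also $\ip{(0,v_{ij})}{(0,v_{ik})}=\ip{v_{ij}}{v_{ik}}=\delta_{jk}$, because the punctured row is orthonormal.
\end{itemize}
So the row consists of $n$ orthonormal vectors in $\C^n$, and hence is a basis. Columns are handled identically, using orthonormality of the punctured columns. This establishes that $\Phi$ is a $\operatorname{QLS}(n)$.

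\textbf{Step 3: counting rays.} I will show that the map $\ray{v}\mapsto\ray{(0,v)}$ is a bijection from $\{\ray{v_{ij}}:i\ne j\}$ onto the off-diagonal rays of $\Phi$, and that $\ray{d}$ is a further ray distinct from all of these.
\begin{itemize}
\item \emph{Distinctness of $\ray{d}$.} The ray $\ray{d}$ differs from every $\ray{(0,v_{ij})}$: any nonzero multiple of $d$ has nonzero first coordinate, while $(0,v_{ij})$ has first coordinate zero.
\item \emph{Well-defined and injective.} For unit vectors $u,v\in\C^{n-1}$, we have $(0,v)=\lambda(0,u)$ if and only if $v=\lambda u$. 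So $\ray{(0,u)}=\ray{(0,v)}$ exactly when $\ray{u}=\ray{v}$.
\item \emph{Surjective.} Every off-diagonal ray $\ray{\phi_{ij}}=\ray{(0,v_{ij})}$ is the image of $\ray{v_{ij}}$ by construction.
\end{itemize}
The set of rays of $\Phi$ is therefore the disjoint union of $\{\ray{d}\}$ and a bijective copy of $\{\ray{v_{ij}}\}$, which gives $\card(\Phi)=1+|\{\ray{v_{ij}}:i\ne j\}|$.

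Nothing here is a real obstacle. The only point needing care is the equivalence in Step 3: equality of rays for vectors of the form $(0,\cdot)$ must be shown to reduce exactly to equality of rays in $\C^{n-1}$, with the scalar $\lambda$ allowed to be any nonzero complex number.
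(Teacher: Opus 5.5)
Your proof is correct and follows essentially the same argument as the paper. Each row and column is $d$ together with an orthonormal basis of the orthogonal summand $\C^{n-1}$, and $\ray{(0,v)}=\ray{(0,w)}$ exactly when $\ray{v}=\ray{w}$, with $\ray{d}$ distinct from all of these; you spell out the inner-product and bijection checks that the paper states more briefly.
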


\begin{proof}
Each row and column consists of the unit vector $d$ in the first summand and
an orthonormal basis of the second, so it is an orthonormal basis of $\C^n$.
Moreover, $(0,v)$ and $(0,w)$ determine the same ray in $\C^n$ exactly when
$v$ and $w$ determine the same ray in $\C^{n-1}$.  The ray of $d$ is distinct
from every ray represented in the second summand.  This proves the
cardinality formula.
\end{proof}

The following lemma allows us to recover one punctured column after all the
others have been checked.

\begin{lemma}[One missing column]\label{lem:missing-column}
Let $V=(v_{ij})_{i\ne j}$ be an order-$n$ array of unit vectors in
$\C^{n-1}$.  If every punctured row and all but one punctured column are
orthonormal bases, then the remaining punctured column is also an
orthonormal basis.
\end{lemma}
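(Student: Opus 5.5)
The natural route is the standard "sum of rank-one projectors" argument, adapted to the punctured setting. For each pair $i\ne j$ let $P_{ij}=v_{ij}v_{ij}^{*}$ denote the orthogonal projector onto $\ray{v_{ij}}$, and let $k$ be the index of the column that has not been checked. The key observation is that a family of $n-1$ unit vectors in $\C^{n-1}$ is an orthonormal basis exactly when the corresponding projectors sum to the identity $I_{n-1}$. One direction is the usual resolution of the identity. For the converse, suppose $\sum_{m} u_m u_m^{*}=I$ with each $\|u_m\|=1$. Taking the trace of both sides, pairing with $u_\ell$ gives
\[
1=\sum_m |\ip{u_m}{u_\ell}|^2=1+\sum_{m\ne\ell}|\ip{u_m}{u_\ell}|^2,
\]
so all cross inner products vanish.

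Next I would sum all $P_{ij}$ over the $n(n-1)$ off-diagonal cells in two ways. Summing row by row, each of the $n$ punctured rows is an orthonormal basis, so the total is $nI$. Summing column by column, each of the $n-1$ checked punctured columns contributes $I$, giving
\[
\sum_{i\ne k}P_{ik}=nI-(n-1)I=I.
\]
The punctured column $k$ consists of exactly $n-1$ unit vectors, so the converse above shows it is orthonormal. Having $n-1$ orthonormal vectors in $\C^{n-1}$, it is then a basis.

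No step here is a real obstacle; the only point that needs care is the bookkeeping of the double sum. Every off-diagonal cell must be counted exactly once in both groupings, and the punctured column $k$ must have exactly $n-1$ entries, which holds because the diagonal cell $(k,k)$ is absent. The argument works verbatim over $\R^{n-1}$, since the real case sits inside the complex one.
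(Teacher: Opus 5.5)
Your proposal is correct and matches the paper's proof: double-count the rank-one projectors $v_{ij}v_{ij}^{*}$ to conclude that the unchecked column's frame operator is $I_{n-1}$. The only difference is the final step. The paper observes that a square synthesis matrix $A$ with $AA^{*}=I_{n-1}$ is unitary. You instead evaluate the quadratic form at each $u_\ell$ to kill the cross terms; this is what your displayed identity does, and it is not a trace. Both routes are fine.
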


\begin{proof}
Let $j_0$ denote the column which has not been checked.  Since each punctured
row is an orthonormal basis, summing all the rank-one frame operators row by
row gives $nI_{n-1}$.  If the same sum is grouped by columns, the $n-1$ known
column bases contribute $(n-1)I_{n-1}$.  The frame operator of column $j_0$
is therefore $I_{n-1}$.  This column has $n-1$ vectors in dimension $n-1$,
so its square synthesis matrix is unitary.  Hence it is an orthonormal basis.
\end{proof}

\section{The construction}\label{sec:construction}

\subsection{The off-diagonal array}

Let $e_0,e_1,e_2,e_3,e_4$ be the standard basis of $\R^5$.  We now construct
the off-diagonal array.  Its entries have the following form, with the
remaining symbols defined in the following subsections.
\begin{equation}\label{eq:V29}
V_{29}=
\begin{pmatrix}
- & e_0 & e_1 & e_2 & e_3 & e_4\\
v_{10} & - & v_{12} & v_{13} & Y & v_{15}\\
v_{20} & v_{21} & - & e_0 & P & R\\
v_{30} & v_{31} & U & - & Q & H\\
v_{40} & v_{41} & v_{42} & v_{43} & - & v_{45}\\
v_{50} & v_{51} & Y & v_{53} & Z & -
\end{pmatrix}.
\end{equation}
The two occurrences each of $e_0$ and $Y$ give two off-diagonal rays that
occur twice.  We shall show in Appendix~\ref{app:separation} that no other
off-diagonal rays coincide.  It remains to define the other entries
in~\eqref{eq:V29}.

The construction is based on replacing orthonormal pairs by other orthonormal
pairs in the same two-dimensional subspaces.  Most of these choices are
independent.  However, in column~$0$ we need one vector in
$\spn\{v_{13},v_{43}\}$ to be perpendicular to both $v_{20}$ and $v_{30}$.
For this to be possible, the projections of $v_{20}$ and $v_{30}$ onto this
subspace must be linearly dependent.  This condition is expressed by the
scalar identity $\Delta=0$ in~\eqref{eq:Delta}, and it accounts for the
particular rational parameters used below.

\subsection{Seed bases}

Set
\begin{equation}\label{eq:parameters}
\begin{array}{c@{\qquad}c@{\qquad}c}
(\alpha,\beta)=\left(\frac{80}{89},\frac{39}{89}\right),&
(\varepsilon,\varphi)=\left(\frac5{13},\frac{12}{13}\right),&
(\gamma,\delta)=\left(\frac{55}{73},\frac{48}{73}\right),\\[2mm]
(\mu,\nu)=\left(\frac{36}{85},\frac{77}{85}\right),&
(\xi,\eta)=\left(\frac{84}{85},\frac{13}{85}\right).
\end{array}
\end{equation}
Each pair in~\eqref{eq:parameters} lies on the unit circle.  We use the first
three pairs to define bases in coordinate planes, and the final two pairs to
define later changes of basis.  These values were found by using the standard
rational parametrization
\[
(x,y)=\left(\frac{1-t^2}{1+t^2},\frac{2t}{1+t^2}\right)
\]
and imposing the condition $\Delta=0$.  The search is used only to obtain the
parameters; all identities needed in the proof are verified exactly below.

Define
\[
N=\sqrt{\varphi^2+\alpha^2\varepsilon^2},
\qquad
\kappa=\frac{\alpha\varepsilon}{\varphi},
\]
so that $N=4\sqrt{481}/89$.  With these constants, set
\begin{align}
Y&=\alpha e_0+\beta e_2,&
Z&=\beta e_0-\alpha e_2,\label{eq:YZ}\\
P&=\gamma e_1+\delta e_4,&
Q&=\delta e_1-\gamma e_4,\label{eq:PQ}\\
R&=\varepsilon e_2+\varphi e_3,&
S&=\varphi e_2-\varepsilon e_3,\label{eq:RS}\\
U&=\frac{\beta\varphi e_0-\alpha\varphi e_2
            +\alpha\varepsilon e_3}{N},\label{eq:U}\\
W&=\frac{-\alpha\beta\varepsilon e_0+\alpha^2\varepsilon e_2
            +\varphi e_3}{N},\label{eq:W}\\
H&=\frac{\alpha e_0+\beta\varphi^2 e_2
            -\beta\varepsilon\varphi e_3}{N}.\label{eq:H}
\end{align}
Direct calculation shows that $U$ is a unit vector perpendicular to both $Y$
and $R$ in $\spn\{e_0,e_2,e_3\}$.  Moreover, $(Y,U,W)$ and $(R,U,H)$ are
orthonormal bases of this space.  The pairs $(Y,Z)$, $(P,Q)$, and $(R,S)$ are
orthonormal bases of their respective coordinate planes.  We shall also use
\begin{equation}\label{eq:key-WU}
W+\kappa U=\frac{N}{\varphi}e_3.
\end{equation}

\subsection{Orthogonal completions}

We first make two changes of basis in orthogonal planes.  Define
\begin{align}
v_{12}&=\mu W+\nu e_4,&
v_{42}&=\nu W-\mu e_4,\label{eq:v1242}\\
v_{15}&=\xi U-\eta e_1,&
v_{45}&=\eta U+\xi e_1.\label{eq:v1545}
\end{align}
The pairs $(W,e_4)$ and $(U,e_1)$ are orthonormal and their spans are
orthogonal.  It follows that $(v_{12},v_{42})$ and $(v_{15},v_{45})$ are
orthonormal pairs whose spans are also orthogonal.  We next define
\begin{align}
r_{13}&=\sqrt{\eta^2+\kappa^2\nu^2},&
r_{43}&=\sqrt{\xi^2+\kappa^2\mu^2},\label{eq:r1343}\\
v_{13}&=\frac{\eta v_{42}+\kappa\nu v_{45}}{r_{13}},&
v_{10}&=\frac{\kappa\nu v_{42}-\eta v_{45}}{r_{13}},\label{eq:v1310}\\
v_{43}&=\frac{\xi v_{12}+\kappa\mu v_{15}}{r_{43}},&
T&=\frac{\kappa\mu v_{12}-\xi v_{15}}{r_{43}}.\label{eq:v43T}
\end{align}
These are orthogonal changes of basis, and hence $(v_{13},v_{10})$ and
$(v_{43},T)$ are orthonormal pairs.  The signs in the definitions have been
chosen so that the numerators of $v_{13}$ and $v_{43}$ both contain the term
$W+\kappa U$.  Indeed,
\begin{align*}
\eta v_{42}+\kappa\nu v_{45}
  &=\nu\eta(W+\kappa U)+\kappa\nu\xi e_1-\eta\mu e_4,\\
\xi v_{12}+\kappa\mu v_{15}
  &=\mu\xi(W+\kappa U)-\kappa\mu\eta e_1+\xi\nu e_4.
\end{align*}
Using~\eqref{eq:key-WU}, the $e_0$- and $e_2$-coordinates in these two
expressions vanish.  Therefore
\begin{align}
v_{13}&=\frac{\kappa\nu\xi e_1+(N/\varphi)\nu\eta e_3
                 -\eta\mu e_4}{r_{13}},\label{eq:v13coords}\\
v_{43}&=\frac{-\kappa\mu\eta e_1+(N/\varphi)\mu\xi e_3
                 +\xi\nu e_4}{r_{43}}.\label{eq:v43coords}
\end{align}
Since $N^2/\varphi^2=1+\kappa^2$, the unit-circle identities for the
parameters give
\[
\|v_{13}\|=\|v_{43}\|=1,
\qquad
\ip{v_{13}}{v_{43}}=0.
\]
We now complete the orthonormal basis in column~$3$.  Identify
$\spn\{e_1,e_3,e_4\}$ with oriented $\R^3$ in the ordered basis
$(e_1,e_3,e_4)$, and define
\begin{equation}\label{eq:v53}
v_{53}=v_{13}\times v_{43}.
\end{equation}
Since $v_{13}$ and $v_{43}$ are orthonormal, their cross product has unit norm
and is perpendicular to both.  Thus $(v_{13},v_{43},v_{53})$ is an
orthonormal basis of $\spn\{e_1,e_3,e_4\}$.

We shall use the following change of basis twice.  Let $(x,y)$ be a real
orthonormal pair, and let $z$ have a nonzero projection onto
$\spn\{x,y\}$.  Define
\begin{equation}\label{eq:completion}
\mathcal C_z(x,y)=
\left(
\frac{\ip{z}{y}x-\ip{z}{x}y}{\rho},
\frac{\ip{z}{x}x+\ip{z}{y}y}{\rho}
\right),
\qquad
\rho=\sqrt{\ip{z}{x}^2+\ip{z}{y}^2}.
\end{equation}
If the projection of $z$ has coordinates $(a,b)$ in the basis $(x,y)$, then
the second vector in~\eqref{eq:completion} is the normalized projection and
the first has coordinates proportional to $(b,-a)$.  Hence
$\mathcal C_z(x,y)$ is an orthonormal pair in $\spn\{x,y\}$ whose first
member is perpendicular to $z$.  We apply this construction by setting
\begin{equation}\label{eq:v2021v3031}
(v_{20},v_{21})=\mathcal C_{v_{10}}(Q,S),
\qquad
(v_{30},v_{31})=\mathcal C_{v_{10}}(P,R).
\end{equation}
Write $\rho_{20}$ and $\rho_{30}$ for the two corresponding normalizing
factors in~\eqref{eq:completion}.  Direct substitution of the parameters
in~\eqref{eq:parameters} gives $\rho_{20}>0$ and $\rho_{30}>0$, so both pairs
are well defined.

We next define the two remaining entries in row~$4$.  They must form an
orthonormal basis of $\spn\{Y,T\}$, with $v_{41}$ perpendicular to $e_0$.
Since $\ip{e_0}{Y}=\alpha$, put
\begin{equation}\label{eq:v4041}
\tau=\ip{e_0}{T},\qquad
r_{40}=\sqrt{\alpha^2+\tau^2},\qquad
v_{40}=\frac{\alpha Y+\tau T}{r_{40}},\qquad
v_{41}=\frac{\tau Y-\alpha T}{r_{40}}.
\end{equation}
Since $r_{40}\geq\alpha>0$, these vectors are well defined.

\subsection{The compatibility condition}

It remains to choose the two entries $v_{50}$ and $v_{51}$.  Let
\[
L=\spn\{v_{13},v_{43}\}.
\]
For column~$0$, we need a unit vector in $L$ which is perpendicular to both
$v_{20}$ and $v_{30}$.  Such a vector exists precisely when the projections
of $v_{20}$ and $v_{30}$ onto $L$ are linearly dependent.  Write the
coordinates of these projections in the basis $(v_{13},v_{43})$ as
\begin{equation}\label{eq:aabb}
a_1=\ip{v_{20}}{v_{13}},\quad
a_2=\ip{v_{20}}{v_{43}},\quad
b_1=\ip{v_{30}}{v_{13}},\quad
b_2=\ip{v_{30}}{v_{43}}.
\end{equation}
Since $(v_{13},v_{43})$ is orthonormal, the two projections are linearly
dependent if and only if
\begin{equation}\label{eq:compatibility}
a_1b_2-a_2b_1=0.
\end{equation}
This condition constrains the rotation parameters.  For the rational values
in~\eqref{eq:parameters}, direct substitution and clearing denominators give
\begin{align}
\Delta={}&\varphi^2\gamma^2\xi^2\eta^2
+2\alpha\mu\delta\varepsilon\varphi\gamma\nu\xi\eta
-\alpha^2\varepsilon^2\gamma^2\mu^2\nu^2
-\beta^2\varepsilon^2\varphi^2\nu^2\xi^2=0.\label{eq:Delta}
\end{align}
Using the five unit-circle identities, the determinant
in~\eqref{eq:compatibility} factors as
\begin{equation}\label{eq:compatibility-factor}
a_1b_2-a_2b_1
=\frac{\alpha\Delta}
{\varphi^2r_{13}r_{43}\rho_{20}\rho_{30}}.
\end{equation}
The denominator is nonzero, and therefore~\eqref{eq:compatibility} holds.
Thus the projections are linearly dependent.  Moreover,
$a_2v_{13}-a_1v_{43}$ is perpendicular to both projections, and hence to
$v_{20}$ and $v_{30}$.  We normalize this vector and take its orthogonal
companion by defining

\begin{equation}\label{eq:v5051}
r_{50}=\sqrt{a_1^2+a_2^2},\qquad
v_{50}=\frac{a_2v_{13}-a_1v_{43}}{r_{50}},\qquad
v_{51}=\frac{a_1v_{13}+a_2v_{43}}{r_{50}}.
\end{equation}
Here $a_1^2+a_2^2=21025/398129>0$, so the normalization is well defined.
Therefore $(v_{50},v_{51})$ is an orthonormal basis of $L$, and $v_{50}$ is
perpendicular to both $v_{20}$ and $v_{30}$.

\subsection{The punctured array}

We have now defined every entry of the array $V_{29}$ in~\eqref{eq:V29}.

\begin{proposition}\label{prop:V29}
The array $V_{29}$ is a punctured orthonormal array of order six in $\R^5$.
\end{proposition}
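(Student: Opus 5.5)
We verify that every punctured row is an orthonormal basis of $\R^5$, check punctured columns $1,2,3,4,5$, and then obtain column $0$ from Lemma~\ref{lem:missing-column}. Each verification uses the same device. We group the five entries into mutually orthogonal blocks, each spanning a known subspace. Within each block the entries form an orthonormal basis of that subspace, and the subspaces together give an orthogonal decomposition of $\R^5$. All the needed facts within a block were recorded during the construction, so the remaining work is to show that the spans decompose correctly.

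\textbf{Rows.} Row $0$ is the standard basis. For row $1$, we have $v_{12},v_{15}\in\spn\{W,e_4\}\oplus\spn\{U,e_1\}$, and $(v_{13},v_{10})$ is an orthonormal basis of $\spn\{v_{42},v_{45}\}$. Together with $(v_{12},v_{15})$, this gives an orthonormal basis of $\spn\{W,U,e_1,e_4\}=Y^\perp$. The remaining entry is $Y$, so row $1$ is complete. For row $2$, the pair $(v_{20},v_{21})$ is an orthonormal basis of $\spn\{Q,S\}$. This plane is orthogonal to $e_0,P,R$, because $Q\perp P$ in $\spn\{e_1,e_4\}$ and $S\perp R$ in $\spn\{e_2,e_3\}$. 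Row $3$ has $(v_{30},v_{31})$ spanning $\spn\{P,R\}$, while $U,Q,H$ lie in $\spn\{e_0,e_2,e_3\}\oplus\spn\{e_1,e_4\}$. We use that $(R,U,H)$ is an orthonormal basis and that $Q\perp P$. For row $4$, $v_{42}$, $v_{45}$, $v_{43}$ and $T$ span $\spn\{W,e_4,U,e_1\}=Y^\perp$, where $(v_{43},T)$ replaces $(v_{12},v_{15})$. The pair $(v_{40},v_{41})$ is an orthonormal basis of $\spn\{Y,T\}$. We must take care here: since $T$ already appears among the replacements, the correct grouping is $\{v_{42},v_{45}\}\perp\{v_{43}\}\perp\spn\{Y,T\}$. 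The three spans have dimensions $2$, $1$ and $2$ and are mutually orthogonal, because $v_{43}\perp T$ and both are orthogonal to $v_{42},v_{45}$ by the orthogonality of the original pairs. For row $5$, $(v_{50},v_{51})$ spans $L=\spn\{v_{13},v_{43}\}$ and $v_{53}$ completes $L$ inside $\spn\{e_1,e_3,e_4\}$. Since $Y,Z$ span $\spn\{e_0,e_2\}$, the row is complete.

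\textbf{Columns $1$ to $5$.} Column $1$ is $e_0,v_{21},v_{31},v_{41},v_{51}$. Here $v_{51}\in L\subset\spn\{e_1,e_3,e_4\}$ is orthogonal to $e_0$. The checks of $v_{21},v_{31},v_{41}$ against $e_0$ and against each other are the least structural part of the argument, and I expect them to need explicit inner products. I would first try to avoid this by checking column $1$ last via Lemma~\ref{lem:missing-column} instead of column $0$. That would require column $0$ to be handled directly, using $v_{50}\perp v_{20},v_{30}$, the fact that $(v_{20},v_{21})$ and $(v_{30},v_{31})$ are built from $\mathcal C_{v_{10}}$ so that $v_{20},v_{30}\perp v_{10}$, and $v_{40}$. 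This is also asymmetric, so in practice I would verify column $0$ by these structural facts together with exact evaluation of the few remaining inner products, such as $\ip{v_{20}}{v_{30}}$ and $\ip{v_{40}}{v_{\cdot0}}$, and then deduce column $1$ from the lemma.

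The other columns are structural. Column $2$ consists of $e_1$, $v_{12}$, $v_{42}$, $U$ and $Y$. Here $(v_{12},v_{42})$ is an orthonormal basis of $\spn\{W,e_4\}$, and $(Y,U,W)$ is an orthonormal basis. Column $3$ is $e_2,v_{13},v_{43},v_{53}$ together with $e_0$. Column $4$ is $e_3,Y,P,Q,Z$, which is orthonormal because $Y,Z$ span $\spn\{e_0,e_2\}$ and $P,Q$ span $\spn\{e_1,e_4\}$. Column $5$ is $e_4,v_{15},v_{45},R,H$. The pair $(v_{15},v_{45})$ spans $\spn\{U,e_1\}$, and $(R,U,H)$ is an orthonormal basis of $\spn\{e_0,e_2,e_3\}$.

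The main obstacle is therefore the single non-structural column among $0$ and $1$, which rests on the compatibility identity $\Delta=0$ and on exact rational arithmetic.
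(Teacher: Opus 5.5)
Your row checks and your checks of columns $2$--$5$ are correct and match the paper's. The plan you finally settle on, verifying column~$0$ directly and recovering column~$1$ from Lemma~\ref{lem:missing-column}, is also the paper's; it contradicts your opening sentence, which proposes the reverse order.

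The gap is in column~$0$. This is the only place where the proposition asserts something not already built into the construction. There you stop at ``exact evaluation of the few remaining inner products'' without saying which products these are or showing that they vanish. Most of column~$0$ is structural:
\begin{itemize}
\item $v_{10}\perp v_{20},v_{30}$ by the completion rule.
\item $v_{20}\perp v_{30}$ because $\spn\{Q,S\}\perp\spn\{P,R\}$, so $\ip{v_{20}}{v_{30}}$ needs no computation.
\item $v_{10}\perp v_{40},v_{50}$ because $v_{10}$ is the companion of $v_{13}$ in $\spn\{v_{42},v_{45}\}$, a plane orthogonal to $Y$, $T$ and $v_{43}$.
\item $v_{40}\perp v_{50}$ because $\spn\{Y,T\}\perp L$.
\item $v_{50}\perp v_{20},v_{30}$ was arranged when $v_{50}$ was defined; $v_{50}\perp v_{30}$ is the only use of $\Delta=0$.
\end{itemize}
What remains, and what your proposal never supplies, is $v_{40}\perp v_{20}$ and $v_{40}\perp v_{30}$, that is,
\[
\alpha\ip{v_{20}}{Y}+\tau\ip{v_{20}}{T}=0,
\qquad
\alpha\ip{v_{30}}{Y}+\tau\ip{v_{30}}{T}=0.
\]

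The paper settles this by computing, with $q=\ip{v_{10}}{Q}$ and $p=\ip{v_{10}}{P}$,
\begin{itemize}
\item $\tau=-\beta\varphi r_{43}/N$,
\item $\ip{v_{20}}{Y}=-\beta\varphi q/\rho_{20}$ and $\ip{v_{20}}{T}=-\alpha Nq/(r_{43}\rho_{20})$,
\item $\ip{v_{30}}{Y}=-\beta\varepsilon p/\rho_{30}$ and $\ip{v_{30}}{T}=-\kappa Np/(r_{43}\rho_{30})$.
\end{itemize}
The first sum then cancels outright, and the second cancels by $\varphi\kappa=\alpha\varepsilon$.

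These values come from expanding $T$ and $v_{10}$ in terms of $W,U,e_1,e_4$; for instance $\ip{Q}{T}=-q\,r_{13}/r_{43}$ and $\ip{R}{T}=\kappa\mu^2N/r_{43}$. They use only the unit-circle relations, $N^2=1-\beta^2\varepsilon^2$ and $\varphi\kappa=\alpha\varepsilon$, so the two orthogonalities hold identically in the parameters.

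Your diagnosis that the remaining obstacle ``rests on the compatibility identity $\Delta=0$ and on exact rational arithmetic'' therefore misplaces it. $\Delta=0$ has already been spent on $v_{30}\perp v_{50}$. The missing step is a symbolic identity in quantities that are not rational (e.g.\ $N=4\sqrt{481}/89$). Until these two products are shown to vanish, column~$0$ is not established, and neither is column~$1$ via the lemma.
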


\begin{proof}
We verify the punctured rows first.  We then verify columns $0,2,3,4,5$ and
use Lemma~\ref{lem:missing-column} for the remaining column.

\emph{Rows.}
Row~$0$ is the standard basis.  In row~$1$, $Y$ is perpendicular
to $\spn\{W,e_4,U,e_1\}$.  This four-dimensional space is the orthogonal
direct sum of $\spn\{v_{12},v_{15}\}$ and
$\spn\{v_{42},v_{45}\}$.  The pair $(v_{12},v_{15})$ is an orthonormal basis
of the first subspace, while $(v_{13},v_{10})$ is an orthonormal basis of the
second.  Hence row~$1$ is an orthonormal basis.

For row~$2$, equation~\eqref{eq:completion} replaces the pair $(Q,S)$ in the
orthonormal basis $(e_0,P,R,Q,S)$ by the orthonormal pair $(v_{20},v_{21})$.
Similarly, in row~$3$ it replaces $(P,R)$ in the orthonormal basis
$(P,R,U,Q,H)$ by $(v_{30},v_{31})$.  Thus rows $2$ and $3$ are orthonormal.
For row~$4$, the pair $(Y,T)$ is perpendicular to
$(v_{42},v_{43},v_{45})$, and~\eqref{eq:v4041} replaces $(Y,T)$ by the
orthonormal pair $(v_{40},v_{41})$.  Finally,~\eqref{eq:v5051} replaces
$(v_{13},v_{43})$ by $(v_{50},v_{51})$.  It follows that
$(v_{50},v_{51},v_{53},Y,Z)$ is an orthonormal basis, which proves the claim
for row~$5$.

\emph{Columns $2,3,4,5$.}
Columns $2,3,4,5$ are respectively
\begin{align*}
&(e_1,v_{12},U,v_{42},Y),\\
&(e_2,v_{13},e_0,v_{43},v_{53}),\\
&(e_3,Y,P,Q,Z),\\
&(e_4,v_{15},R,H,v_{45}).
\end{align*}
Each is an orthonormal basis by the orthogonal decompositions used in the
construction.

\emph{Column $0$.}
This is the column for which the compatibility condition is needed.  All five
vectors in this column have unit norm.  The completion rule gives
$v_{10}\perp v_{20},v_{30}$, and
$\spn\{Q,S\}\perp\spn\{P,R\}$ gives $v_{20}\perp v_{30}$.  Moreover,
$v_{10}\perp v_{40},v_{50}$ because
$v_{10}\perp\spn\{Y,T,v_{13},v_{43}\}$, while
$v_{40}\perp v_{50}$ because
$\spn\{Y,T\}\perp\spn\{v_{13},v_{43}\}$.  It remains to verify the pairs
involving $v_{20}$ or $v_{30}$ with $v_{40}$ or $v_{50}$.

Put $q=\ip{v_{10}}{Q}$ and $p=\ip{v_{10}}{P}$.  Direct substitution, using
$N^2=1-\beta^2\varepsilon^2$, gives
\begin{align*}
\tau&=-\frac{\beta\varphi r_{43}}{N},&
\ip{v_{20}}{Y}&=-\frac{\beta\varphi q}{\rho_{20}},&
\ip{v_{20}}{T}&=-\frac{\alpha Nq}{r_{43}\rho_{20}},\\
\ip{v_{30}}{Y}&=-\frac{\beta\varepsilon p}{\rho_{30}},&
\ip{v_{30}}{T}&=-\frac{\kappa Np}{r_{43}\rho_{30}}.
\end{align*}
Consequently
\begin{align*}
\alpha\ip{v_{20}}{Y}+\tau\ip{v_{20}}{T}&=0,\\
\alpha\ip{v_{30}}{Y}+\tau\ip{v_{30}}{T}&=0,
\end{align*}
where the second equality uses $\varphi\kappa=\alpha\varepsilon$.  Since
$v_{40}=(\alpha Y+\tau T)/r_{40}$, this proves that both $v_{20}$ and
$v_{30}$ are perpendicular to $v_{40}$.  Finally, the definitions
in~\eqref{eq:aabb} and~\eqref{eq:v5051} give
\[
\ip{v_{20}}{v_{50}}=\frac{a_2a_1-a_1a_2}{r_{50}}=0,
\qquad
\ip{v_{30}}{v_{50}}=\frac{a_2b_1-a_1b_2}{r_{50}}=0,
\]
where the second equality uses~\eqref{eq:compatibility}.  Thus every pair of
distinct vectors in column~$0$ is orthogonal, so this column is an
orthonormal basis.

\emph{Column $1$.}
We have proved that every punctured row and each of the columns
$0,2,3,4,5$ is an orthonormal basis.  Lemma~\ref{lem:missing-column} now gives
the result for column~$1$.
\end{proof}

We now extend the punctured array to a quantum Latin square.  Let
$d=(1,0,0,0,0,0)^T\in\R\oplus\R^5$ and write
$\widehat v=(0,v)^T$ for $v\in\R^5$.  Define
\begin{equation}\label{eq:Phi29}
(\Phi_{29})_{ij}=\begin{cases}
d,&i=j,\\
\widehat{(V_{29})_{ij}},&i\ne j.
\end{cases}
\end{equation}

\begin{proof}[Proof of Theorem~\ref{thm:main}]
By Propositions~\ref{prop:V29} and~\ref{prop:diagonal-extension},
$\Phi_{29}$ is a quantum Latin square.  It remains to determine its
cardinality.

The six diagonal entries all equal $d$.  Exactly two off-diagonal rays occur
twice:
\[
(V_{29})_{01}=(V_{29})_{23}=e_0,
\qquad
(V_{29})_{14}=(V_{29})_{52}=Y.
\]
Table~\ref{tab:separation} shows that every other off-diagonal entry lies on a
different ray.  Hence the thirty off-diagonal entries determine twenty-eight
rays: the rays of $e_0$ and $Y$ each occur twice, and the other twenty-six
rays occur once.  The diagonal ray lies in the added orthogonal summand and
is distinct from all of them.  Therefore $\card(\Phi_{29})=29$.
\end{proof}

\section{The order-six spectrum}

\begin{corollary}\label{cor:spectrum}
For order six,
\[
\Spec(\operatorname{QLS}(6))=\{6,8,9,\ldots,36\}.
\]
Equivalently, a quantum Latin square of order six exists with every
cardinality from $6$ through $36$ except $7$.
\end{corollary}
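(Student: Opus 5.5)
The plan is to combine the new value $29$ from Theorem~\ref{thm:main} with the values already established in the literature, and then to handle the two extreme ends of the range separately. First I would check the inclusion $\Spec(\operatorname{QLS}(6))\subseteq\{6,\ldots,36\}$. Each row is an orthonormal basis of $\C^6$, so it contains six pairwise distinct rays; this gives $\card\ge 6$. There are only $36$ cells, so $\card\le 36$.

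Next I would exclude $7$. I would invoke the known result (Paczos et al.~\cite{Paczos2021}, also recorded in the cardinality papers~\cite{ZhangLvCao2026,Xu2026}) that no quantum Latin square of order $n$ has cardinality $n+1$. The underlying argument is short, and I would sketch it for completeness. If $\card=n+1$, then some row contains a ray not shared by every other row, and orthogonality forces the extra ray to split a two-dimensional span. Following this through the columns produces a contradiction, because the vectors in that span would need at least two new rays.

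For existence I would cite the constructions for the remaining values. Classical Latin squares of order six with computational-basis entries give $6$. The tables of Zhang, Lv, and Cao~\cite[Table~6]{ZhangLvCao2026} certify every value in $\{8,\ldots,36\}$ except $23,25,27,29,32,35$. Xu's current version~\cite[Theorems~3--7]{Xu2026} supplies $23,25,27,32,35$, and Xu~\cite{Xu131517} supplies $13,15,17$ if those are not already in the table. Theorem~\ref{thm:main} supplies $29$. Taking the union of these values gives $\{6\}\cup\{8,\ldots,36\}$.

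The main obstacle is bookkeeping rather than mathematics. I must make sure the cited sources together really cover every integer in $\{8,\ldots,36\}$ other than $29$, and that the nonexistence of $7$ is stated in a citable form. I would check these against Corollary~1 of~\cite{Xu2026}, which asserts that $29$ is the only undetermined value. With that corollary in hand, the present corollary follows immediately from Theorem~\ref{thm:main}.
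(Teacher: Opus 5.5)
Your proposal is correct and matches the paper's proof. Both combine Zhang--Lv--Cao's Table~6, Xu's constructions of $23,25,27,32,35$, Theorem~\ref{thm:main} for $29$, and the known exclusion of cardinality $n+1$ (the paper cites \cite[Lemma~3.1]{ZhangWangJi2026}); your bounds $6\le\card(\Phi)\le 36$ only make explicit what the paper leaves implicit. One caution: your sketch of the $n+1$ obstruction is not a working argument as written, but the real one is short. If $\card=n+1$, each row omits exactly one ray and not all rows omit the same one, so some two rows omit different rays $r_a\ne r_b$. Those rows share the remaining $n-1$ pairwise orthogonal rays, which forces $r_a$ and $r_b$ into the same one-dimensional orthogonal complement, a contradiction.
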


\begin{proof}
Zhang, Lv, and Cao list
$[6,22]\setminus\{7\}$ together with
$24,26,28,30,31,33,34,$ and $36$ as attainable, and list
$23,25,27,29,32,$ and $35$ as unresolved~\cite[Table~6]{ZhangLvCao2026}.
Xu gives constructions for $23,25,27,32,$ and $35$~\cite{Xu2026}.  Thus, of
the values from $6$ through $36$, only $7$ and $29$ remain.  Theorem~\ref{thm:main}
supplies $29$, while the general obstruction to cardinality $n+1$ excludes
$7$~\cite[Lemma~3.1]{ZhangWangJi2026}.  This proves the result.
\end{proof}

The example constructed here is real.  It is obtained by fixing a common
diagonal ray and coordinating orthogonal changes of basis in the
five-dimensional complement.  In contrast, Xu's constructions of
cardinalities $27$, $32$, and $35$ use Hadamard products.  It remains an
interesting question whether cardinality $29$ can also be obtained from a
Hadamard-product construction, or from a construction with simpler and more
symmetric coordinates.

\section*{Computational and AI assistance}

OpenAI Codex assisted with the parameter search, exact symbolic verification,
reference checking, and manuscript revision.  The authors take responsibility
for the mathematical content and the final text.

\appendix

\section{Projective signatures for cardinality 29}\label{app:separation}

We now verify the ray count used in the proof of Theorem~\ref{thm:main}.
Assign labels to the cells of $\Phi_{29}$ as follows:
\begin{equation}\label{eq:C29}
C_{29}=
\begin{pmatrix}
00&01&03&04&05&06\\
07&00&08&09&02&10\\
11&12&00&01&13&14\\
15&16&17&00&18&19\\
20&21&22&23&00&24\\
25&26&02&27&28&00
\end{pmatrix}.
\end{equation}
The label $00$ occurs in the six diagonal cells.  The label $01$ occurs at
$(0,1)$ and $(2,3)$, while the label $02$ occurs at $(1,4)$ and $(5,2)$.
Each label from $03$ through $28$ occurs once.

For $x=(x_0,\ldots,x_4)^T$, let
$\operatorname{supp}(x)=\{k:x_k\ne0\}$.  If two nonzero vectors are
projectively proportional, then they have the same support and the same
ratios between corresponding nonzero coordinates.  We use these invariants
to separate the off-diagonal labels.  Table~\ref{tab:separation} groups the
labels by support and records the required coordinate ratios, with the values
in each row listed in label order.  For support $\{1,2,3,4\}$, we use the
ordered pair
\[
r(x)=\left(x_4/x_1,\;x_2/(\sqrt{481}x_1)\right).
\]

\begin{table}[htbp]
\centering
\caption{Projective signatures for the off-diagonal classes.}
\label{tab:separation}
\scriptsize
\renewcommand{\arraystretch}{1.18}
\begin{tabular}{p{0.22\linewidth} p{0.13\linewidth} p{0.53\linewidth}}
\toprule
Support & Labels & Separating data\\
\midrule
$\{0\},\{1\},\{2\}$ & $01,03,04$ & The supports are unique.\\
$\{3\},\{4\},\{2,3\}$ & $05,06,14$ & The supports are unique.\\
$\{0,2\}$ & $02,28$ &
$x_2/x_0:\ \frac{39}{80},\ -\frac{80}{39}$.\\
$\{0,1,2,3,4\}$ & $07,20$ &
$x_2/x_0:\ -\frac{80}{39},\ \frac{1070067615}{21844238533}$.\\
$\{0,2,3,4\}$ & $08,22$ &
$x_4/(\sqrt{481}x_0):\ -\frac{6853}{10800},\ \frac{267}{1925}$.\\
$\{1,3,4\}$ & $09,23,25,26,27$ &
$x_4/x_1:\ -\frac{10413}{53900},\ -\frac{47971}{1300},\
-\frac{70444321}{63198300},\ \frac{49642153}{57155300},\
\frac{1100}{1157}$.\\
$\{0,1,2,3\}$ & $10,24$ &
$x_1/(\sqrt{481}x_0):\ -\frac{13}{756},\ \frac{28}{39}$.\\
$\{1,2,3,4\}$ & $11,12$ &
$r:\ (-\frac{55}{48},-\frac{136437}{17680}),\
(-\frac{55}{48},\frac{6205}{11686857})$.\\
$\{1,2,3,4\}$ & $15,16$ &
$r:\ (\frac{48}{55},\frac{1695717}{105925820}),\
(\frac{48}{55},-\frac{10220}{301977})$.\\
$\{1,2,3,4\}$ & $21$ &
$r:\ (\frac{1100}{1157},\frac{3583161}{11380460})$.\\
$\{1,4\}$ & $13,18$ &
$x_4/x_1:\ \frac{48}{55},\ -\frac{55}{48}$.\\
$\{0,2,3\}$ & $17,19$ &
$x_2/x_0:\ -\frac{80}{39},\ \frac{27}{65}$.\\
\bottomrule
\end{tabular}
\end{table}

In the first two rows of Table~\ref{tab:separation}, each listed support occurs
only once.  For every other support except $\{1,2,3,4\}$, the displayed
coordinate ratios are distinct.  For support $\{1,2,3,4\}$, the first
component of $r$ separates the three groups in the table, and the second
component separates the two labels in each of the first two groups.  It
follows that labels $01,\ldots,28$ represent twenty-eight distinct
off-diagonal rays.  The label $00$ lies in the added one-dimensional summand
and cannot coincide with any of them.  Hence~\eqref{eq:C29} gives exactly the
ray classes of $\Phi_{29}$.

\end{document}